\documentclass{amsart}


\usepackage{amssymb,amsbsy,amsthm,amsmath,graphicx,epsfig,color}
\usepackage{verbatim}
\usepackage[T1]{fontenc}

\newtheorem{thm}{Theorem}[section]
\newtheorem*{thm*}{Theorem}

\newtheorem{lemma}[thm]{Lemma}

\theoremstyle{remark}



\parskip 7pt

\parindent 0pt







\newcommand{\id}{\mathrm{id}}



\newcommand{\N}{\mathbb{N}}







\newcommand{\veps}{\varepsilon}



\newcommand{\la}{\langle}
\newcommand{\ra}{\rangle}

\newcommand{\Eins}{\mathbf{1}}

\newcommand{\TT}{\mathcal{T}}
\newcommand{\TTinv}{\mathcal{T}_{\text{inv}}}
\newcommand{\BS}{\mathcal{BS}}

\newcommand{\rg}{\text{rg}}

\newcommand{\sot}{\texttt{SOT}}
\newcommand{\wot}{\texttt{WOT}}

\title{A generic transformation is invertible}

\author{Tanja Eisner}
\address{Institute of Mathematics, University of Leipzig,
P.O. Box 100 920, 04009 Leipzig, Germany}
\email{eisner@math.uni-leipzig.de}

\subjclass[2010]{28D05, 37A05, 54H05}
\keywords{measure-preserving transformations, generic, invertible}

\begin{document}

%
%

\maketitle

\begin{abstract} We show that, on a standard non-atomic probability space,  inver\-tible measure-preserving transformations form a dense $G_\delta$ 
subset of the space of all measure-preserving transformations endowed with the strong ($=$weak) operator topology. This implies that all properties which are generic for inver\-tible transformations are also generic for general ones. We further show that invertible Koopman operators form a dense $G_\delta$ subset of all bi-stochastic ope\-rators for the weak operator topology, and the same holds for general Koopman operators.
\end{abstract}

\section{Introduction}

Let $(X,\mu)$ be a standard non-atomic probability space and $T:X\to X$ be measure preserving, i.e., satisfies $\mu(T^{-1}(A))=\mu(A)$ for every measurable $A\subset X$. Recall that the corresponding \emph{Koopman operator}\footnote{We denote it by the same letter as the transformation following a tradition in ergodic theory.} $T$ on $L^2(X,\mu)$ is defined by $(Tf)(x):=f(Tx)$ and is an isometry on $L^2(X,\mu)$. Moreover, this operator  is invertible, i.e., unitary, if and only if\footnote{see, e.g., \cite[Rem.~8.48(e)]{EF-book} for the ``only if'' direction} the underlying transformation is essentially invertible. 

Denote by $\TT$ the set of all measure-preserving transformations on $(X,\mu)$. Then  $\TT$ is a complete metric\footnote{An example of a metric inducing this topology is 
$$
d(T,S):=\sum_{j=1}^\infty \frac{\|Tf_j-Sf_j\|}{2^{j}\|f_j\|}
$$
for a fixed dense subset $(f_j)$ of $L^2(X,\mu)\setminus \{0\}$.} space with respect to the strong ($=$weak) operator topology for the corresponding Koopman operators, cf.~Halmos \cite{Halmos-book}. 

Usually one works with the space $\TTinv$ of all invertible measure-preserving transformations on $(X,\mu)$ with respect to the strong$^*$ ($=$weak) operator topology\footnote{One says that a net $(T_\alpha)$ of operators on a Hilbert space converges \emph{strongly$^*$} to an operator $T$ if $\lim_\alpha T_\alpha=T$ and $\lim_\alpha T_\alpha^*=T^*$ strongly.} for the corresponding Koopman operators which is also a complete metric space, see Halmos \cite{Halmos-book}, and studies generic properties of transformations therein.  
Since the existence proof by Halmos \cite{Halmos44} and Rokhlin \cite{Rohlin48} of a weakly mixing but not mixing invertible transformation (with a first concrete example constructed much later by Chacon \cite{Chacon}), the study of generic properties in  $\TTinv$ has been an active and exci\-ting area of research. For example, the following further properties are known to be generic in $\TTinv$: zero entropy (Rohlin \cite{Rohlin59}), rigidity (Katok \cite{Katok85}, Nadkarni \cite[Chapter 8]{Nadkarni-book}), 
having roots of all orders (see King \cite{King00}) and embedding into many flows (see de la Rue, de Sam Lazaro \cite{delaRue-deSamLazaro03}, Stepin, Eremenko \cite{StepinEremenko04}). For more results see, e.g., Stepin \cite{Stepin86}, Choksi, Nadkarni \cite{ChoksiNadkarni90}, Nadkarni \cite{Nadkarni-book}, del Junco, Lema\'nczyk \cite{delJuncoLemanczyk92}, Ageev \cite{Ageev03},  Ryzhikov \cite{Ryzhikov97,Ryzhikov24}, Solecki \cite{Solecki14,Solecki23}. For generic properties of extensions we refer to Schnurr \cite{Schnurr19}, Glasner, Weiss \cite{GlasnerWeiss19}, Ryzhikov \cite{Ryzhikov23,Ryzhikov23b}, Kozyreva, Ryzhikov \cite{KozyrevaRyzhikov25}.

Non-invertible transformations are harder to work with than invertible ones and are not so well-studied. We will show that nevertheless all generic properties from $\TTinv$ transfer to $\TT$ by the following.

\begin{thm}[Invertibility is generic]\label{thm:invgen}
The set $\TTinv$ of all invertible measure-preserving transformations is a dense $G_\delta$ subset of $\TT$ with respect to the strong ($=$weak) operator topology for the corresponding Koopman operators.
\end{thm}
In particular, the set $\TTinv$ is residual in $\TT$. 
%
We present two proofs of the $G_\delta$-property, a short one 
 based on the fact that an isometry is invertible if and only if it has dense range, and a longer one using the points of continuity method.

An operator theoretic analogue of Theorem \ref{thm:invgen} with a residuality statement instead of the dense $G_\delta$ property was shown in \cite{E10}, with the corresponding $G_\delta$-refinement  essentially done by Grivaux, Matheron, Menet \cite{GrivauxMatheronMenet22}, see \cite[Theorem 7.1]{EisnerGillet}. For a related result for continuous maps on the unit interval preserving the Lebesgue measure see Bobok, \v Cin\v c, Oprocha, Troubetzkoy \cite{BobokCincOprochaTroubetzkoy}.
 
A linear operator $P$ on $L^2(X,\mu)$ is called \emph{bi-stochastic} if $P\Eins=P^*\Eins=\Eins$ and $P$ is positive, i.e., satisfies $Pf\geq 0$ for every $f\geq 0$. Recall that such operators are contractive on $L^\infty(X,\mu)$, $L^1(X,\mu)$ and hence by interpolation on every $L^p(X,\mu)$, $p\in[1,\infty]$. Clearly, every Koopman operator is bi-stochastic, and the set $\BS$ of all bi-stochastic operators on $L^2(X,\mu)$ is a complete metric space with respect to the weak operator topology. The following shows the genericity of Koopman operators in this space, where we notationally identify a measure-preserving transformation with the corresponding Koopman operator.

\begin{thm}[Being Koopman is generic]\label{thm:Koopman-gen}
Both 
$\TTinv$ and 
$\TT$ are  
dense $G_\delta$ subsets of $\BS$ with respect to the weak operator topology.
\end{thm}

Also here we present two proofs of the $G_\delta$ property.
For the operator theore\-tic analogue of Theorem \ref{thm:Koopman-gen} see \cite[Theorem 7.1]{EisnerGillet} (with the residuality statement done in \cite{E10}). 


\textbf{Acknowledgements.} The author is very grateful to Ethan Akin for a crystallisation of the argument of the first proof in Section \ref{sec:Gdelta-1} from its original version concerning residuality. 
She is indebted to Valentin Gillet for very helpful discu\-ssions regarding the points of continuity method in the operator theoretic context and, in particular, for the references  \cite{GrivauxMatheronMenet21b,GrivauxMatheronMenet22} and the proof of Lemma \ref{lem:T-sep-SOT*}. She also thanks Raj Dahya for his question regarding genericity of  Koopman operators in all bi-stochastic operators resulting in Theorem \ref{thm:Koopman-gen} as well as the referee for careful reading and helpful comments.

\section{Proof: Density}\label{sec:proof-density}

The following result describes the weak closure of $\TTinv$, see Vershik \cite[Sections 1--2]{Vershik77} and a simplification in Ryzhikov \cite[Remark after Lemma 5.5]{Ryzhikov97b}. We present the details for the reader's convenience. 
\begin{lemma}\label{lem:weak-closure-discrete}
The Koopman operators of invertible measure-preserving transformations are dense in the set of all bi-stochastic operators on $L^2(X,\mu)$ with respect to the weak operator topology.  
\end{lemma}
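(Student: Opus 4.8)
The plan is to approximate an arbitrary bi-stochastic operator $P$ in the weak operator topology by Koopman operators of invertible measure-preserving transformations. The key structural fact I would exploit is that a bi-stochastic operator on $L^2[0,1]$ can be viewed, via its action on indicator functions, as a doubly stochastic object: for a finite measurable partition $\{A_1,\dots,A_n\}$ of $[0,1]$ into sets of equal measure, the numbers $p_{ij} := \mu(A_i)^{-1}\langle P\Eins_{A_j}, \Eins_{A_i}\rangle$ form a doubly stochastic matrix, because $P\Eins = P^*\Eins = \Eins$ and $P$ is positive. The whole argument is a ``dyadic + Birkhoff'' discretization: first reduce to finitely many test functions and finitely many dyadic levels, then use the Birkhoff--von Neumann theorem to write the relevant doubly stochastic matrix as a convex combination of permutation matrices, and finally realize a single permutation matrix by a genuine invertible measure-preserving transformation of $[0,1]$ that permutes dyadic subintervals.

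First I would fix $\veps>0$ and finitely many functions $g_1,\dots,g_m$ witnessing a basic weak neighborhood of $P$; by density of dyadic step functions in $L^2$ I may assume each $g_k$ is constant on the dyadic intervals $I_i = [(i-1)/2^N, i/2^N)$ for some large $N$, and similarly approximate the targets $Pg_k$ by their conditional expectations onto this dyadic algebra. This reduces the whole problem to matching $P$ against the finite-dimensional algebra generated by the $2^N$ equal-measure intervals $I_1,\dots,I_{2^N}$. Restricting and compressing $P$ to this algebra produces a doubly stochastic $2^N\times 2^N$ matrix $Q=(q_{ij})$ with $q_{ij}=2^N\langle P\Eins_{I_j},\Eins_{I_i}\rangle$; the conditions $P\Eins=P^*\Eins=\Eins$ guarantee that its rows and columns sum to one, and positivity gives $q_{ij}\ge 0$.

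Next I would invoke the Birkhoff--von Neumann theorem to write $Q = \sum_{\ell} \lambda_\ell \Pi_\ell$ as a convex combination of permutation matrices $\Pi_\ell$, with $\lambda_\ell \ge 0$ and $\sum_\ell \lambda_\ell = 1$. The point of passing through permutations is that each permutation matrix $\Pi_\ell$ is realized \emph{exactly} by an invertible measure-preserving transformation $S_\ell$ of $[0,1]$ that rigidly permutes the intervals $I_1,\dots,I_{2^N}$ among themselves (each $S_\ell$ is piecewise a translation between equal-length dyadic intervals, hence measure-preserving and invertible). The convex combination itself is then realized by a single invertible transformation $S$: I refine the partition one more level by splitting each $I_i$ into subintervals whose lengths reproduce the weights $\lambda_\ell$ (using a common refinement so that the $\lambda_\ell$ are approximated by dyadic proportions up to an error controllable by taking $N$ larger), and on the $\ell$-th piece of each $I_i$ I let $S$ act as $S_\ell$ does on that block. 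Averaging the permutations geometrically in this way makes the compression of the Koopman operator $S$ to the coarse algebra agree with $Q$ up to an error that tends to $0$, which is exactly what is needed to land inside the prescribed weak neighborhood of $P$.

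The main obstacle I anticipate is the last realization step: turning the convex combination of permutation matrices into a single genuine transformation of the interval while controlling the approximation on the \emph{fixed} finite set of test functions $g_1,\dots,g_m$, rather than merely on indicators of the partition. The subtlety is that $S$ is built from a refined partition, so I must check that the compression of $S$ to the coarse dyadic algebra reproduces $Q$ (this is where the averaging is designed to work), and that the residual errors from (i) approximating $g_k$ and $Pg_k$ by dyadic step functions and (ii) approximating the real weights $\lambda_\ell$ by dyadic proportions are each $O(\veps)$ in $L^2$. Once these two approximation estimates are in place, the triangle inequality closes the argument, and letting $\veps\to 0$ yields the claimed weak density.
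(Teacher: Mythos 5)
Your proposal is correct, and its outer skeleton coincides with the paper's: dyadic discretization, the observation that $q_{ij}=2^N\la P\Eins_{I_j},\Eins_{I_i}\ra$ forms a doubly stochastic matrix, exact matching of matrix elements on dyadic step functions, and then density plus contractivity of $P$ and $S$ to close a basic weak neighborhood. Where you genuinely diverge is the realization step. The paper never invokes Birkhoff--von Neumann: it builds the invertible transformation directly as an interval exchange, cutting each $I_j$ into subintervals of lengths $a_{j1},\dots,a_{j,2^n}$ and translating the piece of length $a_{jk}$ into $I_k$; the column sums $\sum_j a_{jk}=2^{-n}$ guarantee that the translated pieces exactly tile each $I_k$, so invertibility and the matrix equality come in one stroke, with no external theorem. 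Your route --- decompose $Q=\sum_\ell\lambda_\ell\Pi_\ell$ into permutations, realize each $\Pi_\ell$ by a rigid interchange $S_\ell$ of the dyadic intervals, then average geometrically by splitting each $I_i$ into pieces of lengths $\lambda_\ell 2^{-N}$ and letting $S$ act as $S_\ell$ on the $\ell$-th piece --- is also valid, and it buys a clean conceptual picture: the convex-hull structure of doubly stochastic matrices is mirrored on the level of transformations. One simplification you should make: the dyadic approximation of the weights $\lambda_\ell$ is unnecessary. Nothing requires the sub-pieces to have dyadic endpoints; cutting each $I_i$ into pieces of \emph{exact} lengths $\lambda_\ell 2^{-N}$, placed at the same relative positions in every $I_i$, makes $S$ a well-defined invertible piecewise translation (the images $S(I_i^\ell)=I_{\pi_\ell(i)}^\ell$ are pairwise disjoint and cover $[0,1]$), and its compression to the coarse algebra equals $Q$ exactly. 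Then your error term (ii) vanishes, and only the approximation of the test functions by dyadic step functions remains --- exactly the single error term the paper handles.
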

\begin{proof}
Assume without loss of generality that $X=[0,1]$ and $\mu$ is the Lebesgue measure, see, e.g., \cite[Rem.~8.48(d)]{EF-book}. Let $P$ be a bi-stochastic operator on $L^2[0,1]$, $n\in\N$ and divide $[0,1]$ into $2^n$ dyadic intervals $I_j:=\left[\frac{j-1}{2^n},\frac{j}{2^n}\right]$, $j\in\{1,\ldots,2^n\}$. We call the characteristic functions of such intervals \emph{dyadic functions of degree $n$}. Denote $
a_{jk}:=
\la P \Eins_{I_j},\Eins_{I_k}\ra$ for $j,k\in\{1,\ldots,2^n\}$.

 Since $P$ is bi-stochastic, the matrix $(2^na_{jk})$ is bi-stochastic, i.e., $a_{jk}\geq 0$ and $\sum_{j}a_{jk}=\sum_{k}a_{jk}=\frac1{2^n}$ holds for all $j,k$.  It is now easy to construct a measure-preserving transformation $T$ on $[0,1]$ satisfying
\begin{equation}\label{eq:T=P}
\la T\Eins_{I_j},\Eins_{I_k}\ra =
\la P\Eins_{I_j},\Eins_{I_k}\ra\quad \forall j,k\in\{1,\ldots,2^n\}
\end{equation}
as follows.
Divide the interval $I_1$ into subintervals $I_{j,1}$ of length $a_{j1}$, $j\in\{1,\ldots, 2^n\}$, which we count from left to right. Define $T$ so that it leaves the first subinterval $I_{11}$ as it is, shifts the second subinterval $I_{2,1}$ to the beginning of $I_2$, etc. 
Analogously define $T$ on $I_2$ shifting the  subinterval $I_{1,2}$ of length $a_{12}$ to the beginning of the not yet covered part of $I_1$ (intersecting the covered part in one end point), shifting the subinterval of length $a_{22}$ to the left of the not yet covered part of $I_2$ etc. 
In this way, for every $n$ we obtain an invertible measure-preserving transformation $T_n$ on $[0,1]$ satisfying \eqref{eq:T=P} by
$$
\la T\Eins_{I_j},\Eins_{I_k}\ra=\mu(T^{-1}(I_j)\cap I_k)=\mu(I_{j,k})=a_{jk}\quad \forall j,k.
$$
Note that by \eqref{eq:T=P} and linearity  
$\la T f,g\ra =\la Pf,g\ra$ holds 
for all $f,g$ being linear combinations of dyadic functions of degree 
$\leq n$.
Since the linear span of dyadic functions is dense in $L^2[0,1]$, the sequence $(T_n)$ approximates $P$ weakly.
\end{proof}
 

\section{Theorem \ref{thm:invgen}: 
$G_\delta$ property}\label{sec:Gdelta-1}

\subsection{First proof}

The first proof of the $G_\delta$ property of $\TTinv$ in $\TT$ is inspired by 
\cite{E10}. 

Let $(f_j)$ be dense in $L^2(X,\mu)$. Since every isometry has closed range and $(Tf_j)$ is dense in it, we have
\begin{eqnarray*}
\TTinv
=\bigcap_{j,k\in\N}\bigcup_{l\in\N}\left\{T\in \TT:\, \|f_j-Tf_l\|<\frac1k\right\}
=:\bigcap_{j,k\in\N}\bigcup_{l\in\N}M_{j,k,l}.
\end{eqnarray*}
It is clear that every set $M_{j,k,l}$ is open for the strong operator topology, finishing the proof.



\subsection{Second proof}

We 
also provide an alternative proof of the $G_\delta$ property via the points of continuity method.

The following is an ergodic theoretic analogue of Grivaux, Matheron, Menet \cite[Prop.~2.11]{GrivauxMatheronMenet22}.
Here and later we denote by $\sot$ and $\sot^*$  the strong and the strong$^*$ operator topology on the Koopman operators, respectively.
\begin{lemma}\label{lem:points-cont}
The set $\TTinv$ coincides with the set of continuity points of the identity map $\text{id}:(\TT,\sot)\to (\TT,\sot^*)$. 
\end{lemma}
\begin{proof}
Let $T\in \TTinv$ and $(T_n)\subset \TT$ converging to $T$ in the strong operator topo\-logy. Then it converges also in the weak operator topology implying $T_n^*\to T^*$ weakly. Since $T$ is by assumption invertible, $T^*$ is an isometry as well and hence\footnote{Recall that weak (operator) convergence of contractions on a Hilbert space implies strong convergence as soon as the limit operator is an isometry.} $T_n^*\to T^*$ strongly.  Thus $T$ is a point of continuity of $\text{id}:(\TT,\sot)\to (\TT,\sot^*)$.  

Let now $T\in\TT$ be such a point of continuity  and let $f\in L^2(X,\mu)$. 
 By Lemma \ref{lem:weak-closure-discrete} there exists a sequence $(T_n)\subset \TTinv$ converging to $T$ weakly and hence strongly. By the point of continuity assumption we have
$$
\|f\|=\lim_{n\to\infty}\|T_n^*f\|=\|T^*f\|,
$$
and in particular $T^*$ is injective. Equivalently, $T$ has dense range, so\footnote{Recall that isometries always have closed range.} is surjective and hence invertible. This shows $T\in\TTinv$.
\end{proof}

The idea of the following proof in the operator theoretic setting, being a simplification of the one by Grivaux, Matheron, Menet \cite[beginning of Sect.~3]{GrivauxMatheronMenet21b}, was communicated to the author by Valentin Gillet.

\begin{lemma}\label{lem:T-sep-SOT*}
The space $(\TT,\sot^*)$ is separable. 
\end{lemma}
\begin{proof}
Let $(f_k)_{k=1}^\infty$ be dense in $L^2(X,\mu)$ and consider the set 
$$
A:=\{((Tf_k)_{k=1}^\infty,(T^*f_k)_{k=1}^\infty):\, T\in\TT\}\subset  (L^2(X,\mu))^\N\times (L^2(X,\mu))^\N=:Y.
$$
Observe first that $Y$ is a Polish space, where $(L^2(X,\mu))^\N$ is endowed with the product topology induced by the norm topology on $L^2(X,\mu)$. Since for metric spaces separability is equivalent to second countability, $A$ is separable as well. 

Thus let $(a_n)$ be dense in $A$ and denote by $(T_n)$ the corresponding countable subset 
of $\TT$. We show that $(T_n)$ is dense in $(\TT,\sot^*)$. 
Let $T\in\TT$ and let $(n_l)$ be a subsequence of $\N$ such that $(a_{n_l})$ approximates $((Tf_k)_{k=1}^\infty,(T^*f_k)_{k=1}^\infty)$ in $Y$. This means that $(T_{n_l})$ approximates $T$ and $(T^*_{n_l})$ approximates $T^*$ in $\sot$, i.e., $(T_{n_l})$ approximates $T$ in  $\sot^*$. The proof is complete.
\end{proof}

\begin{proof}[Proof of Theorem \ref{thm:invgen}]

By a classical result of Baire, see Kechris \cite[Thm.~24.14]{Kechris-book}, the set of continuity points of a Baire class $1$ map\footnote{A map $f:X\to Y$ between topological spaces $X$ and $Y$ is called \emph{of Baire class $1$} (or \emph{Borel $1$}) if the preimage of every open subset of $Y$ is $F_\sigma$ in $X$.} $f:X\to Y$ form a dense $G_\delta$ subset of $X$ whenever $X,Y$ are metrizable and $Y$ is separable. By Lemmata \ref{lem:points-cont} and \ref{lem:T-sep-SOT*} it just remains to check the Baire class $1$ property of the identity map $\id:(\TT,\sot)\to(\TT,\sot^*)$. This can be done as in \cite[Lemma 2.9]{GrivauxMatheronMenet22} and we repeat here the simple argument for completeness. 

Every $\sot^*$-open subset of $\TT$ can be written as a countable union of finite intersections of sets either of the form 
\begin{equation*}
U_{S,f,\veps}:=\left\{T\in \TT:\, \|(T-S)f\|<\veps\right\}=\bigcup_{n=1}^\infty\left\{T\in \TT:\, \|(T-S)f\|\leq \veps-\frac1n\right\}
\end{equation*}
or of the form 
\begin{eqnarray*}
U^*_{S,f,\veps}:=\{T\in \TT:\, \|(T^*-S^*)f\|<\veps\}=\bigcup_{n=1}^\infty\left\{T\in \TT:\, \|(T^*-S^*)f\|\leq \veps-\frac1n\right\}
\end{eqnarray*}
for $S\in\TT$, $f\in L^2(X,\mu)$ and $\veps>0$. Since $\|(T^*-S^*)f\|\leq \veps-\frac1n$ can be written as 
$$
|\la (T-S)g,f\ra|=|\la g,(T^*-S^*)f\ra|\leq \veps-\frac1n \qquad \forall g\in L^2(X,\mu) \text{ with }\|g\|=1,
$$
the $\sot$-$F_\sigma$ property of $\sot^*$-open sets is clear. Thus $\id:(\TT,\sot)\to(\TT,\sot^*)$ is of Baire class $1$. 
%
\end{proof}

\section{Proof of Theorem \ref{thm:Koopman-gen}}
\label{sec:bi-stoch}

We already saw in Lemma \ref{lem:weak-closure-discrete} that $\TTinv$ (and hence also $\TT$) is dense in $\BS$ with respect to the weak operator topology, where we again notationally identify a measure-preserving transformation with its Koopman operator. For the $G_\delta$ pro\-perty we will again provide two proofs.

\subsection{First proof}

We first show the $G_\delta$ property of  $\TT$. 

It is well known that an operator $T\in \BS$ is a (not necessarily invertible) Koopman operator 
if and only if it is an isometry. Indeed, by Ionescu Tulcea\footnote{Ionescu Tulcea considered invertible isometries/nonsingular transformations, but the same proof (using Lamperti \cite[Proof of Thm.~3.1]{Lamperti58}, cf.~Royden \cite[Proof of Thm.~15.24]{Royden-book}) works also in the general case.} \cite[Footnote 3]{IonescuTulcea64}, every positive isometry on $L^2(X,\mu)$ has the form 
$$(Tf)(x)=h(x)f(\tau x)$$ 
for some $h\in L^2(X,\mu)$ with $h\geq 0$ and some 
null preserving\footnote{A measurable transformation is called \emph{null preserving} if its preimages of null sets are null sets.} transformation $\tau:X\to X$. Now, $T\Eins=\Eins$ implies $h=\Eins$ and $T'\Eins=\Eins$ implies $\int_X f\circ \tau \,d\mu=\int_X f\,d\mu$, i.e., $\tau$ is measure preserving and hence $T$ is Koopman.  

Thus 
$$
\TT=\bigcap_{j,k\in\N}\left\{T\in \BS:\ \|Tf_j\|> \frac{k}{k+1}\|f_j\|\right\}=:\bigcap_{j,k\in\N} M_{j,k}
$$
for a fixed dense sequence $(f_j)$ in $L^2(X,\mu)\setminus\{0\}$. We show that $\TT$ belongs to the interior of each $M_{j,k}$, i.e., that
$$
\TT=\bigcap_{j,k\in\N} \text{int} M_{j,k}
$$
holds (which clearly implies the $G_\delta$ property). Assume that for some $j,k$ this is not the case and there exists a sequence $(T_n)\subset \BS\setminus M_{j,k}$ converging to $T\in \TT$ weakly. Then it also converges to $T$ strongly and hence $\|Tf_j\|\leq \frac{k}{k+1}\|f_j\|$, a contradiction ($T$ is an isometry).  

To show the $G_\delta$ property of $\TTinv$, observe that a Koopman operator is invertible if and only if its adjoint is again a Koopman operator.  Indeed, the ``only if'' direction is clear and the ``if'' direction follows from $\rg T=\overline{\rg T}=(\ker T^*)^\perp$ for an isometry $T$. Thus we have 
$$
\TTinv=\TT \cap \TT^*.
$$
Since the map $T\mapsto T^*$ is an isomorphism\footnote{It is easy to see that the adjoint of a positive operator is positive using the fact that $f\geq 0$ holds if and only if $\la f,g\ra\geq 0$ for every $g\geq 0$.} on $\BS$ with respect to the weak operator topology, the $G_\delta$ property of $\TTinv$ follows from the $G_\delta$ property of $\TT$ shown above.

\subsection{Second proof}
This proof is again based on 
the points of continuity method. 
Hereby we denote by $\wot$ the weak operator topology on $\BS$.

\begin{lemma}
The sets $\TTinv$ and  $\TT$ coincide with the set of continuity points of the map $\id:(\BS,\wot)\to (\BS, \sot^*)$ and the map $\id:(\BS,\wot)\to (\BS, \sot)$, respectively. 
\end{lemma}
\begin{proof}
We will show the assertion for $\TTinv$ while the proof for $\TT$ is analogous. Let $(T_n)\subset \BS$ converge to $T\in \TTinv$ weakly, implying also that $(T_n^*)$ converges weakly to $T^*$. Since both $T$ and $T^*$ are isometries, we see that both sequences converge in fact strongly, i.e., $T$ is a point of continuity of $\id:(\BS,\wot)\to (\BS, \sot^*)$. 

Conversely, let $T\in \BS$ be a point of continuity of $\id:(\BS,\wot)\to (\BS, \sot^*)$. By Lemma \ref{lem:weak-closure-discrete} there exists a sequence $(T_n)\subset \TTinv$ converging to $T$ weakly. By the point of continuity assumption we see that $(T_n)$ converges to $T$ strongly$^*$. Since $\TTinv$ is closed with respect to the strong$^*$ topology, $T\in \TTinv$.
\end{proof}

We now argue as in the proof of Theorem \ref{thm:invgen}. The proof of the Baire class $1$ property for the maps $\id: (\BS,\wot)\to(\BS, \sot^*)$ and $\id: (\BS,\wot)\to(\BS, \sot)$ follows as in the proof of Theorem \ref{thm:invgen} by representing the sets 
$U_{S,f,\veps}:=\{T\in\BS:\, \|(T-S)f\|<\veps\}$ for $S\in\BS$, $f\in L^2(X,\mu)$ and $\veps>0$ as 
$$
\bigcup_{n=1}^\infty \left\{T\in \BS:\, |\la(T-S)f,g\ra|\leq \veps-\frac1n \quad \forall g\text{ with $\|g\|\leq 1$}\right\}
$$
and the sets $U^*_{S,f,\veps}:=\{T\in\BS:\, \|(T^*-S^*)f\|<\veps\}$ as 
$$
\bigcup_{n=1}^\infty \left\{T\in \BS:\, |\la(T-S)g,f\ra|=|\la g,(T^*-S^*)f\ra|\leq \veps-\frac1n \quad \forall g\text{ with $\|g\|\leq 1$}\right\}.
$$
The same argument as in the proof of Lemma \ref{lem:T-sep-SOT*} (replacing $\TT$ by $\BS$) shows that $\BS$ is separable with respect to $\sot^*$ (and hence also $\sot$). Thus the $G_\delta$ property of both $\TTinv$ and $\TT$ follows from Baire's result, see Kechris \cite[Thm.~24.14]{Kechris-book}.


\begin{thebibliography}{10}


\bibitem{Ageev03}
O.~N.~Ageev, \emph{On the genericity of some non-asymptotic dynamical properties}, Uspekhi Mat. Nauk \textbf{58} (2003),
177--178; translation in Russian Math. Surveys \textbf{58} (2003), 173--174.





\bibitem{BobokCincOprochaTroubetzkoy} J.~Bobok, J.~\v Cin\v c, P.~Oprocha, S.~Troubetzkoy, \emph{Generic continuous Lebesgue measure-preserving interval maps are nowhere monotone but invertible a.e.}, Studia Math., to appear. Available at https://arxiv.org/abs/2405.09917.


\bibitem{Chacon} R.~V.~Chacon, \emph{Weakly mixing transformations which are not strongly mixing},  Proc. Am. Math. Soc. \textbf{22} (1969), 559--562.

\bibitem{ChoksiNadkarni90}  J.~R.~Choksi, M.~G.~Nadkarni, \emph{Baire category in spaces of measures, unitary operators, and transformations},  Invariant Subspaces and Allied Topics, pp. 147--163. Narosa Publishing House (1990).  







\bibitem{delaRue-deSamLazaro03} T.~de la Rue, J.~de Sam Lazaro,  \emph{Une transformation g\'en\'erique peut \^etre ins\'er\'ee dans un flot}, Ann. Inst. H. Poincar\'e Probab. Stat. \textbf{39} (2003), 121--134.


\bibitem{delJuncoLemanczyk92}A.~del Junco, M.~Lema\'nczyk, 
\emph{Generic spectral properties of measure-preserving maps and applications},
Proc. Amer. Math. Soc. \textbf{115} (1992), 725--736.

\bibitem{E10} T.~Eisner,  \emph{A ``typical'' contraction is unitary}, Enseign. Math.  \textbf{56} (2010), 403--410.



\bibitem{EF-book}  T.~Eisner, B.~Farkas, A Journey Through Ergodic Theorems, Birkh\"auser Advanced Texts Basler Lehrb\"ucher, Birkh\"auser Verlag, 2025.


\bibitem{EisnerGillet}  T.~Eisner, V.~Gillet, \emph{Weak limit semigroup in operator theory and ergodic theory}. Preprint, 2026, available at https://arxiv.org/abs/2603.24816.










\bibitem{GlasnerWeiss19}
E.~Glasner, B.~Weiss, 
\emph{Relative weak mixing is generic}, Sci. China Math. \textbf{62} (2019), 69--72.





\bibitem{GrivauxMatheronMenet21b}
S.~Grivaux, \'E.~Matheron, Q.~Menet, 
\emph{Does a typical $l_p$-space contraction have a non-trivial invariant subspace?}, Trans. Amer. Math. Soc. \textbf{374} (2021), 7359--7410.


\bibitem{GrivauxMatheronMenet22}
S.~Grivaux, \'E.~Matheron, Q.~Menet, 
\emph{Generic properties of $l^p$-contractions and similar operator topologies}. Preprint, 2022, available at https://arxiv.org/abs/2207.07938.



\bibitem{Halmos44} P.~R.~Halmos, \emph{In general a measure preserving transformation is mixing}, Ann. Math. \textbf{45} (1944), 786--792.

\bibitem{Halmos-book} P.~Halmos, Lectures on Ergodic Theory, Chelsea Publishing Company, 1956.


\bibitem{IonescuTulcea64}
A.~Ionescu Tulcea, \emph{Ergodic properties of isometries in $L^p$ spaces, $1<p<\infty$,} Bull. Amer. Math. Soc. \textbf{70} (1964), 366--371.



\bibitem{Katok85} A. Katok, \emph{Approximation and Generity in Abstract Ergodic Theory}, Notes 1985.

\bibitem{Kechris-book}
A.S.~Kechris, Classical Descriptive Set Theory. Graduate Texts in Mathematics, vol. 156, Springer, 1995.


\bibitem{King00} J. L. King, \emph{The generic transformation has roots of all orders}, Colloq. Math.  \textbf{84/85} (2000), 521--547.

\bibitem{KozyrevaRyzhikov25}
D.~K.~Kozyreva, V.~V~Ryzhikov, 
\emph{Generic extensions are recurrent for aperiodic probability automorphisms}, preprint, see https://arxiv.org/abs/2510.16006.



\bibitem{Lamperti58}
J.~Lamperti, \emph{On the isometries of certain function-spaces}, Pacific J. Math. \textbf{8} (1958), 459--466.



\bibitem{Nadkarni-book} M.~G.~Nadkarni, Spectral Theory of Dynamical Systems. Birkhäuser Advanced Texts, Basler Lehrb\"ucher, 1991.




\bibitem{Rohlin48} V.~A.~Rohlin, \emph{A ``general'' measure-preserving transformation is not mixing},
Doklady Akad. Nauk SSSR \textbf{60} (1948), 349--351.

\bibitem{Rohlin59} V.~A.~Rohlin, \emph{Entropy of metric automorphism},
Doklady Akad. Nauk SSSR \textbf{124} (1959), 980--983.



\bibitem{Royden-book}  H.~L.~Royden, Real analysis.
Third edition. Macmillan Publishing Company, New York, 1988.





\bibitem{Ryzhikov97b}
V.~Ryzhikov, 
\emph{Intertwinings of tensor products, and the stochastic centralizer of dynamical systems}, Sb. Math., \textbf{188} (1997), 237--263.


\bibitem{Ryzhikov97}
V.~Ryzhikov, 
\emph{Weak limits of powers, simple spectrum of symmetric products, and rank-one mixing constructions}, Sb. Math. \textbf{198} (2007),  733--754.



\bibitem{Ryzhikov23}
V.~V~Ryzhikov, 
\emph{Generic extensions of ergodic systems}, 
Mat. Sb. \textbf{214} (2023),  98--115; translation in
Sb. Math. \textbf{214} (2023), 1442--1457.

\bibitem{Ryzhikov23b}
V.~V~Ryzhikov, 
\emph{Self-joinings and generic extensions of ergodic systems}, 
Translation of Funktsional. Anal. i Prilozhen. \textbf{57} (2023), 74--88, Funct. Anal. Appl. \textbf{57} (2023), 236--247.

\bibitem{Ryzhikov24}
V.~V~Ryzhikov, 
\emph{Generic properties of ergodic automorphisms}, preprint, see https://arxiv.org/abs/2407.18236.



\bibitem{Schnurr19}
M.~Schnurr, 
\emph{Generic properties of extensions}, Ergodic Theory Dynam. Systems \textbf{39} (2019), 3144--3168.


\bibitem{Solecki14} S.~Solecki, \emph{Closed subgroups generated by generic measure automorphisms}, 
Ergodic Theory Dynam. Systems \textbf{34} (2014), 1011--1017.


\bibitem{Solecki23} S.~Solecki, \emph{Generic measure preserving transformations and the closed groups they generate}, Invent.~Math. \textbf{231} (2023), 805--850.

\bibitem{Stepin86} A.~M.~Stepin, \emph{Spectral properties of generic dynamical systems}, Izv. Akad. Nauk SSSR Ser. Mat. 50 (1986), 801--834, 879.

\bibitem{StepinEremenko04} A.~M.~Stepin, A.~M.~Eremenko, 
\emph{Nonuniqueness of an inclusion in a flow and the vastness of a centralizer for a generic measure-preserving transformation}, 
Mat. Sb. 195(2004),  95--108; translation in
Sb. Math. 195(2004), 1795--1808.

\bibitem{Vershik77}
A.~M.~Vershik, \emph{Multivalued measure-preserving mappings (polymorphisms) and Markov processes}, Zap. Nauchn. Sem. Leningrad. Otdel. Mat. Inst. Steklov. (LOMI) 72 (1977), 26--62; English transl. in J. Soviet Math. 23:4 (1983).


\end{thebibliography}
\end{document}